\newtheorem{theorem}{Theorem}[section]
\newtheorem{lemma}[theorem]{Lemma}
\newtheorem{corollary}[theorem]{Corollary}
\newtheorem{theo}{Theorem}[section]
\theoremstyle{definition}
\newtheorem{conjecture}[theorem]{Conjecture}
\newcommand{\bea}{\begin{eqnarray*}}
\newcommand{\eea}{\end{eqnarray*}}
\numberwithin{equation}{section}
\begin{document}

\centerline{On a  conjecture by Blocki-Zwonek}

\centerline{by}

\centerline{John Erik Forn\ae ss}

\centerline{Department of Mathematics, NTNU Trondheim, Norway.}

Abstract. This paper gives a counterexample to a conjecture by Blocki and Zwonek on the
area of sublevel sets of the Green's function.

\section{Introduction}

In [1] Blocki and Zwonek make the following conjecture:

\begin{conjecture}
If $\Omega$ is a pseudoconvex domain in $\mathbb C^n,$ then the function
$$
t\rightarrow \log \lambda(\{G_{\Omega,w}<t\})
$$
is convex. 
\end{conjecture}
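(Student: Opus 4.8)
The plan is to disprove the conjecture by exhibiting a bounded pseudoconvex $\Omega$ and a point $w$ for which $F(t):=\log\lambda(\{G_{\Omega,w}<t\})$ is not convex. It helps to fix the behaviour at the left end of the range first. Since $G_{\Omega,w}(z)=\log\|z-w\|+O(1)$ near $w$, the rescaled sets $e^{-t}(\{G_{\Omega,w}<t\}-w)$ converge (to the Azukawa indicatrix) and $\lambda(\{G_{\Omega,w}<t\})=e^{2nt}(c+o(1))$ as $t\to-\infty$ for some $c>0$, so $F(t)=2nt+\log c+o(1)$ and $F'(t)\to 2n$. Hence, if $F$ were convex we would have $F'\ge 2n$ throughout, i.e. $t\mapsto e^{-2nt}\lambda(\{G_{\Omega,w}<t\})$ would be non-decreasing. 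So it is enough to build a domain in which this quantity \emph{decreases} on some interval --- geometrically, one wants $\Omega$ to look like a ball near $w$ but to become ``thin'' a little further out, so that once the sublevel sets reach the thin part they swell strictly more slowly than the model rate $e^{2nt}$.

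Such behaviour will not be found among the simplest classes of domains, and ruling these out both focuses the search and explains why the eventual example must be somewhat delicate. If $\Omega\subset\mathbb C$ is simply connected, a Riemann map $\varphi\colon\mathbb D\to\Omega$ with $\varphi(0)=w$ gives $\{G_{\Omega,w}<t\}=\varphi(\{|\zeta|<e^t\})$, so with $\varphi'=\sum_{k\ge 0}a_k\zeta^k$ one gets $\lambda(\{G_{\Omega,w}<t\})=\pi e^{2t}\sum_{k\ge 0}\frac{|a_k|^2}{k+1}e^{2kt}$; the inner sum is a sum of the log-linear functions $e^{2kt}$ with non-negative coefficients, hence log-convex, so $F(t)-2t$ is convex and the conjecture holds. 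If $\Omega$ is balanced and $w=0$, then $G_{\Omega,0}=\log h_\Omega$ with $h_\Omega$ the Minkowski functional, so $\{G_{\Omega,0}<t\}=e^t\Omega$ and $F$ is exactly linear. So the search must move to $\mathbb C^2$ (or higher) and to non-balanced, non-product domains. A natural candidate is a Hartogs domain $\Omega_\psi=\{(z_1,z_2):|z_1|<1,\ |z_2|<e^{-\psi(z_1)}\}$ with $\psi$ subharmonic on $\mathbb D$ and $\psi(0)=0$, the pole placed at $(0,0)$: if one can describe the sublevel sets by slicing in $z_2$, then $\lambda(\{G<t\})$ becomes a weighted area of the form $\pi\int_{S_t}e^{2(t-\psi(z_1))}\,dA(z_1)$ over the $z_1$-shadow $S_t$ of $\{G<t\}$, and the problem collapses to a one-variable question about how $\int e^{-2\psi}$ over expanding planar regions grows, the aim being to choose $\psi$ so that the logarithm of this integral grows ``too concavely'' to be repaired by the linear term $2t$.

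The main obstacle is that this requires knowing $G_{\Omega_\psi,(0,0)}$, and the ``easy'' form of the Green's function is precisely unavailable in the case one actually needs. When $\psi$ is \emph{harmonic}, say $\psi=\operatorname{Re}h$ with $h(0)=0$, the map $(z_1,z_2)\mapsto(z_1,z_2e^{h(z_1)})$ is a biholomorphism onto the bidisc and $G_{\Omega_\psi,(0,0)}=\max(\log|z_1|,\ \psi(z_1)+\log|z_2|)$; but then the slicing integrand is $e^{-2\psi}=|e^{-h}|^2$, the area $\int_{|z_1|<r}|e^{-h}|^2\,dA$ is once more a Bergman-space quantity whose logarithm is convex in $\log r$, and $F$ is convex again. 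Moreover \emph{radial} subharmonic $\psi$ make $\Omega_\psi$ balanced, so $F$ is linear there as well. Thus one is forced onto a genuinely subharmonic, non-radial $\psi$, and for these $\max(\log|z_1|,\psi+\log|z_2|)$ is only a \emph{lower} bound for $G$ (for $\psi=c|z_1|^2$, say, the true Green's function, computed from the Minkowski functional of the then-balanced domain, is strictly larger). So $G$ has to be pinned down --- by producing matching plurisubharmonic sub- and super-solutions of the homogeneous complex Monge--Ampère equation with the prescribed logarithmic pole and zero boundary values, by a Lempert-type analysis of the extremal analytic discs, or, since the argument really only needs the sublevel sets to be no larger than a suitable thin model, by a sharp enough one-sided estimate for $G$. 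Obtaining and justifying this estimate is the crux.

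Once $G$, or adequate two-sided bounds for it, is available on the chosen domain, the proof finishes by direct computation: evaluate, or estimate above and below, $t\mapsto\lambda(\{G<t\})$, set $F(t)=\log$ of it, and exhibit a $t_0$ with $F''(t_0)<0$ --- equivalently, show that $e^{-2nt}\lambda(\{G<t\})$ strictly decreases on some interval, contradicting the conjecture. I expect this last step to reduce to checking the sign of the second derivative of an elementary expression manufactured from the chosen weight $\psi$, so that, with the Green's function under control, the failure of convexity is displayed by a short self-contained estimate.
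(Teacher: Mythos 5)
Your proposal is a programme rather than a proof: the step you yourself identify as the crux --- determining $G_{\Omega_\psi,(0,0)}$ (or adequate two-sided bounds) for a genuinely subharmonic, non-harmonic, non-radial weight $\psi$, and then exhibiting a $t_0$ with $F''(t_0)<0$ --- is never carried out. No specific $\psi$ is produced, no estimate for the Green's function is proved, and no failure of convexity is verified, so as it stands no counterexample has been exhibited. The preliminary observations (the asymptotics $F(t)=2nt+\log c+o(1)$ forcing $F'\ge 2n$ under convexity, the Riemann-map computation for simply connected planar domains, the linearity for balanced domains) are correct, but they only narrow the search; they do not produce a domain.

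There is also a wrong turn in the case analysis that sends you away from the route that actually works. From the convexity of $F$ for simply connected planar domains and its linearity for balanced domains you conclude that ``the search must move to $\mathbb C^2$ (or higher).'' This overlooks multiply connected planar domains, and that is exactly where the paper finds its counterexample: an explicit triply connected $\Omega\subset\mathbb C$ (a sublevel set of $|(\tfrac1w-1)^3+1|$) whose Green's function with pole at the origin has a degenerate critical point at $p=1$ with $G(z)-G(p)=\mathcal O(|z-p|^3)$. Near such a point the band $\{G(p)-\epsilon<G<G(p)\}$ contains a sector of radius comparable to $\epsilon^{1/3}$, so $s(G(p))-s(G(p)-\epsilon)\ge c\,\epsilon^{2/3}$; the left difference quotients of $s$, hence of $\log s$, then blow up at the interior point $t=G(p)$, which is impossible for a convex function. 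In short, the mechanism is a higher-order critical point of the Green's function, already available in $\mathbb C$ for non-simply-connected domains, and it yields a short, fully computable example --- something your Hartogs-domain plan does not reach because the Green's function there is left undetermined.
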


Here $w$ is any given point in $\Omega$ and $G$ denotes the pluricomplex Green function with
pole at $w.$ Also $\lambda$ denotes Lebesgue measure.

In this note we provide a counterexample in $\mathbb C.$ The example is constructed in the next section.
It is a triply connected domain where the Green function has a higher order critical point.

\section{The example}

\begin{lemma}
There exists a bounded domain $\Omega\subset \mathbb C$ and a point $q\in \Omega$ such that the Green function 
$G$ with pole at $q$ has a critical point $p\in  \Omega$ so that $G(z)-G(p)=\mathcal O(|z-p|^3)$ in a neighborhood of $p.$ The domain is triply connected with boundary consisting of three simply closed real analytic smooth curves.
\end{lemma}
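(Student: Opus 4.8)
The plan is to exhibit the required domain as one member of a one-parameter family of triply connected domains and to catch a degenerate critical point at a transition between two qualitatively different regimes. Put $h:=2\,\partial G/\partial z=G_x-iG_y$; it is holomorphic on $\Omega\setminus\{q\}$ with a simple pole of residue $1$ at $q$, and its zeros are exactly the critical points of $G$. Since $G<0$ in $\Omega$ and $G=0$ on $\partial\Omega$, Hopf's lemma gives $\nabla G\neq0$ on $\partial\Omega$, so $h$ has no boundary zeros, and a computation of the winding of $\nabla G$ along the $n$ boundary curves together with the argument principle yields (classically) exactly $n-1$ zeros of $h$ in an $n$-connected domain; thus a triply connected $\Omega$ has exactly two critical points counted with multiplicity. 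If $h$ has a double zero at some $p\neq q$, write $G=\operatorname{Re}\Phi$ near $p$ with $\Phi$ holomorphic and $\Phi'=h$; then $\Phi(z)=\Phi(p)+\tfrac{a}{3}(z-p)^3+O(|z-p|^4)$ with $a=\tfrac12 h''(p)\neq0$, so $G(z)-G(p)=\operatorname{Re}\!\big(\tfrac{a}{3}(z-p)^3\big)+O(|z-p|^4)=\mathcal O(|z-p|^3)$. Hence it suffices to construct a triply connected domain with real-analytic boundary carrying a \emph{double} critical point.

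\emph{The family.} Fix the pole $q=\tfrac12$. For a small constant $\varepsilon>0$, to be chosen, and $t\in(\varepsilon,\tfrac12]$ put
\[
\Omega_t:=D(0,1)\setminus\big(\overline{D(it,\varepsilon)}\cup\overline{D(-it,\varepsilon)}\big),
\]
a triply connected domain bounded by three real-analytic circles and invariant under $z\mapsto\bar z$; write $G_t:=G_{\Omega_t,q}$, $h_t:=2\,\partial G_t/\partial z$. Since $q\in\mathbb R$, symmetry forces $G_t(\bar z)=G_t(z)$, so $G_{t,y}\equiv0$ on $\mathbb R\cap\Omega_t=(-1,1)\setminus\{\tfrac12\}$ and $h_t(\bar z)=\overline{h_t(z)}$; consequently the two-point zero set $\Sigma(t)$ of $h_t$ is invariant under conjugation, hence is either a pair of real points or a genuine conjugate pair $\{p,\bar p\}$ with $p\notin\mathbb R$. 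The goal is to see that $\Sigma(t)$ is real for $t$ close to $\varepsilon$ and is a conjugate pair for $t=\tfrac12$, and then follow $\Sigma$ from one regime to the other.

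\emph{The two regimes.} At $t=\tfrac12$, as $\varepsilon\to0$ one has $G_{1/2}\to G_{D(0,1),q}$ locally uniformly off $\{q,\pm\tfrac i2\}$, and $G_{D(0,1),q}$ has no critical point at all — its $h$ is $\tfrac1{z-q}+\tfrac q{1-qz}$, nonvanishing for $|q|<1$ — so with the usual boundary control near $\pm1$ and near $q$ we get $h_{1/2}$ nonvanishing on all of $(-1,1)\setminus\{\tfrac12\}$ once $\varepsilon$ is small; fix such an $\varepsilon$, and then $\Sigma(\tfrac12)$ is a conjugate pair off $\mathbb R$. For $t$ near $\varepsilon$: as $t\downarrow\varepsilon$, $\Omega_t$ tends to $\Omega_\varepsilon:=D(0,1)\setminus\big(\overline{D(i\varepsilon,\varepsilon)}\cup\overline{D(-i\varepsilon,\varepsilon)}\big)$, whose two holes are internally tangent to $\mathbb R$ at $0$; since the complement of $\Omega_\varepsilon$ near $0$ contains a line segment through $0$, the origin is a regular boundary point of $\Omega_\varepsilon$, so $G_{\Omega_\varepsilon,q}$ extends continuously by $0$ there, and stability of the Green function under this degeneration gives $G_t(0)\to G_{\Omega_\varepsilon,q}(0)=0$. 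Meanwhile $G_t(-1)=0$ for all $t$, while $G_t(-\tfrac12)\to G_{\Omega_\varepsilon,q}(-\tfrac12)=:c_\varepsilon<0$ (an interior value). Hence for $t$ close enough to $\varepsilon$ the continuous function $x\mapsto G_t(x,0)$ on $[-1,0]$ is strictly smaller at the interior point $-\tfrac12$ than at both endpoints, so attains an interior minimum at some $x^*\in(-1,0)$; there $G_{t,x}(x^*,0)=0$, and $G_{t,y}(x^*,0)=0$ by symmetry, so $(x^*,0)$ is a critical point of $G_t$ on $\mathbb R$, whence by the count all of $\Sigma(t)$ lies on $\mathbb R$.

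\emph{Conclusion.} Fix $t_0\in(\varepsilon,\tfrac12)$, close to $\varepsilon$, with $\Sigma(t_0)\subset\mathbb R$, and join $t_0$ to $\tfrac12$ by the interval $[t_0,\tfrac12]$; along it $\{\Omega_t\}$ is uniformly nondegenerate (fixed hole radii, continuously varying centers, boundary components and pole uniformly separated), so $G_t$ and $h_t$ vary continuously on any fixed compact set lying uniformly inside all the $\Omega_t$, and — since $|\nabla G_t|$ is bounded below on $\partial\Omega_t$ uniformly in $t$ — the critical sets $\Sigma(t)$ stay in a fixed compact subset of the $\Omega_t$ and vary continuously in the Hausdorff metric. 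The set $S:=\{t\in[t_0,\tfrac12]:\Sigma(t)\subset\mathbb R\}$ is closed, contains $t_0$, and omits $\tfrac12$; let $t^*:=\sup S$, so $\Sigma(t^*)\subset\mathbb R$ while $\Sigma(t)=\{p(t),\overline{p(t)}\}$ with $p(t)\notin\mathbb R$ for $t>t^*$. Letting $t\downarrow t^*$, the conjugate pair converges to $\Sigma(t^*)\subset\mathbb R$, which is possible only if $\Sigma(t^*)$ consists of a single point $p^*\in\mathbb R$ of multiplicity two. Then $h_{t^*}$ has a double zero at $p^*\neq q$, so $G_{t^*}(z)-G_{t^*}(p^*)=\mathcal O(|z-p^*|^3)$ by the first paragraph, and $\Omega_{t^*}$ — triply connected, bounded by three real-analytic circles, with pole $q=\tfrac12$ and critical point $p^*$ — is the desired domain. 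The step I expect to be the genuine obstacle is the first regime, i.e. the limit $G_t(0)\to0$ and the resulting non-monotonicity of $G_t(\cdot,0)$ on the real axis: this is the assertion that squeezing the two holes together against the real axis drives the Green function between them to zero, and it rests on the regularity of the cusp-like boundary point of $\Omega_\varepsilon$ together with continuity of the Green function under this degeneration; the uniform estimates behind the continuity of $\Sigma$ are routine but must also be supplied.
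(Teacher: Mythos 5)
Your argument is correct in outline, but it takes a genuinely different route from the paper. The paper is fully constructive: it pulls back the Green function of the complement of a disc under the degree-three map $w\mapsto(1/w-1)^3+1$, which has a critical point of local degree $3$ at $w=1$; this yields the closed-form Green function $G(w)=\tfrac13\log|w^3/(1-3w+3w^2)|-\tfrac19$, and the $\mathcal O(|w-1|^3)$ behaviour is then verified by a two-line Taylor expansion. You instead run a soft bifurcation argument: using the classical count of exactly $n-1$ critical points of the Green function of an $n$-connected domain, the reflection symmetry of your family $\Omega_t$, and an intermediate-value argument in $t$, you force the two critical points to collide on the real axis at some $t^*$, producing a double zero of $\partial G/\partial z$ and hence the cubic order of vanishing. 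The trade-offs are clear: your proof needs several standard but nontrivial potential-theoretic inputs to be supplied in full (the $n-1$ count, uniform Hopf estimates keeping $\Sigma(t)$ compactly inside $\Omega_t$, stability of Green functions under the two degenerations, and Hurwitz continuity of the zero set), and the barrier estimate showing $G_t(0)\to 0$ as the holes pinch the real axis --- which you correctly flag as the crux --- does go through (compare $G_t$ in the annulus $\varepsilon<|z-it|<2\varepsilon$ with the harmonic function $-M\log(|z-it|/\varepsilon)/\log 2$). On the other hand, your construction is non-explicit: one only knows that some $t^*$ works. That is enough for the lemma as stated and even for the rest of the paper (the expansion $G-G(p^*)=\operatorname{Re}(\tfrac a3(z-p^*)^3)+\mathcal O(|z-p^*|^4)$ with $a\neq 0$ supports the sector argument of the next lemma after rotating the sector to align with $\arg a$), but it loses the computable constants that the paper's explicit formula provides. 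One small inaccuracy worth fixing: a double zero of $h$ at $p$ gives $\Phi(z)=\Phi(p)+\tfrac a3(z-p)^3+\mathcal O(|z-p|^4)$ with $a=\tfrac12 h''(p)$, and you should note that $a\neq0$ is automatic here because $h$ has exactly two zeros counted with multiplicity, so the zero at $p^*$ has multiplicity exactly two.
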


\begin{proof}

We construct the domain $\Omega$ by modifying domains in steps:
$$\Omega_1,\Omega_2,\Omega_3, \Omega_4=\Omega.$$
First, we define $\Omega_1\subset \mathbb P^1.$ The point $\infty$ will be an interior point.
The constant $r=e^{-1/3}$, $0<r<1.$ 

$$
\Omega_1:=\{\eta\in \mathbb P^1; |\eta+1|>r.\}
$$
So $\Omega_1$ is a topological disc. $\eta=0\in \Omega_1.$ The complement of $\Omega_1$ is the closed disc centered at $\eta=-1$ with radius $r.$ 

Next we let 

$$
\Omega_2:=\{\tau\in \mathbb P^1; |\tau^3+1|>r\}.
$$

Again $\infty $ is in the domain. The points $-1, e^{i\pm \frac{\pi}{3}}=c_1,c_2,c_3$ belong to three
simply connected domains $D_j$ with real analytic boundary $\gamma_1, \gamma_2, \gamma_3$. The union of their (pairwise disjoint)
closures constitute the complement of $\Omega_2$ in $\mathbb P^1.$ The origin and infinity are in the interior of the domain.

We tranlate 1 unit to the right:

$$
\Omega_3:= \{\sigma\in \mathbb P^1; |(\sigma-1)^3+1|>r\}
$$

The points $0,1+e^{\pm i \frac{\pi}{3}}=c'_1,c'_2,c'_3$ belong to three simply connected domains $D_j'$ with real analytic boundary $\gamma'_1,\gamma_2',\gamma_3'$. The union of their (pairwise disjoint) closures constitute the complememt of $\Omega_3$.
The point 1 and infinity belong to the interior of $\Omega_3.$ Also $c'=0$ is contained in a neighborhood $ D_1'$
which is in the complement of $\Omega_3.$

Next we make an inversion so that the domain becomes bounded.

$$
\Omega_4=\Omega=\{w\in \mathbb P^1; |\left( \frac{1}{w} -1    \right)^3+1|>r\}.
$$

The points $\infty, \frac{1}{1+e^{\pm i \frac{\pi}{3}}}=c''_1,c_2'',c_3''$ belong to three simply connected domains $D_j''$ with real analytic boundary $\gamma_1'', \gamma_2'', \gamma_3''$. The union of their closures constitute the complement of $\Omega.$
The point 1 and 0 belong to the interior of $\Omega.$ $\Omega$ lies inside the curve $\gamma_1''$ and their
are two holes, bounded by $\gamma_2'',\gamma_3''$ respectively.

We rewrite the description of $\Omega:$

$$
\Omega = \{w\in \mathbb P^1; \log  |\left( \frac{1}{w} -1    \right)^3+1|-\log r>0\}
$$

$$
\Omega = \{w\in \mathbb P^1; \log | \frac{(1-w)^3+w^3}{w^3}          |-\log r>0\}
$$

$$
\Omega = \{w\in \mathbb P^1; \log | \frac{w^3}{(1-w)^3+w^3}         |+\log r<0\}
$$

$$
\Omega = \{w\in \mathbb P^1; \log | \frac{w^3}{1-3w+3w^2}         |+\log e^{-1/3}<0\}
$$

$$
\Omega = \{w\in \mathbb P^1; \frac{1}{3}\log | \frac{w^3}{1-3w+3w^2}         |+\frac{1}{3}(-1/3)<0\}
$$

Let $G(w)=\frac{1}{3}\log | \frac{w^3}{1-3w+3w^2}         |-\frac{1}{9}$
Then $\Omega$ is the set where this function is negative. In a neighborhood of the origin, we have $G(w)=\log|w|+\mathcal O(1).$ In the rest of $\Omega$ the function is harmonic. Hence $G(w) $ is the Green function for $\Omega$ with a pole at the origin.  

Next we consider the function $G$ in a neighborhood of the interior point $w=1:$
We get $G(1)=\frac{-1}{9}.$
We estimate $G(w)+\frac{1}{9}:$

\bea
G(w)+\frac{1}{9} & = & \frac{1}{3} \log |\frac{w^3}{w^3+(1-w)^3}|\\
& = & \frac{1}{3}\log|\frac{1}{1+\left(\frac{1-w}{w}  \right)^3}|\\
& = & \frac{1}{3}\log|1-\left(\frac{1-w}{w}  \right)^3+\mathcal O( \left(\frac{1-w}{w}  \right)^6)|\\
& = & \frac{1}{3}(Re)(\log (1-\left(\frac{1-w}{w}  \right)^3+\mathcal O( \left(\frac{1-w}{w}  \right)^6)\\
& = & \frac{1}{3}(Re)(-\left(\frac{1-w}{w}  \right)^3+\mathcal O( \left(\frac{1-w}{w}  \right)^6)\\
& = & 3(Re)(-(1-w)^3+\mathcal O(1-w)^4\\
& = & \mathcal O(|w-1|^3)\\
\eea

\end{proof}

\begin{lemma}
Let $S_\epsilon$ denote the sector $S_\epsilon=\{w=1+re^{i\theta}; 0<r<\epsilon^{1/3}/2, \frac{11\pi}{12}
<\theta<\frac{13\pi}{12}\}.$ There is an $\epsilon_0>0$ so that if
$0<\epsilon<\epsilon_0$ then 
$$
S_\epsilon\subset \{w\in \Omega; -\epsilon<G(w)-G(1)<0\}.
$$
\end{lemma}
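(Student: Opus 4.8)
The plan is to reduce everything to the cubic Taylor expansion of $G$ at $w=1$ that is produced inside the proof of the previous lemma. Reading the cubic term off that computation, and using $\left(\tfrac{1-w}{w}\right)^3 = -(w-1)^3 + \mathcal O(|w-1|^4)$, one gets, on some fixed disc $|w-1|<\rho_0$ where $G$ is real-analytic (the argument of the logarithm equals $1$ at $w=1$, hence is nonzero and finite nearby),
$$ G(w)-G(1) \;=\; \tfrac13\,\mathrm{Re}\bigl((w-1)^3\bigr) \;+\; \mathcal O(|w-1|^4), $$
with a uniform implied constant, say $\bigl|G(w)-G(1)-\tfrac13\mathrm{Re}((w-1)^3)\bigr| \le C|w-1|^4$ for $|w-1|<\rho_0$.

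Next I would substitute $w = 1 + re^{i\theta}$ with $(r,\theta)$ ranging over $S_\epsilon$, so that $\mathrm{Re}((w-1)^3) = r^3\cos 3\theta$. The point of the prescribed angular window is that $\theta\in(\tfrac{11\pi}{12},\tfrac{13\pi}{12})$ forces $3\theta\in(\tfrac{11\pi}{4},\tfrac{13\pi}{4})$, i.e.\ $3\theta$ is congruent mod $2\pi$ to an angle in $(\tfrac{3\pi}{4},\tfrac{5\pi}{4})$; hence $-1\le\cos 3\theta\le-\tfrac1{\sqrt2}$, uniformly on the sector. Combining this with the remainder bound yields, for $w\in S_\epsilon$,
$$ -\tfrac13 r^3 - Cr^4 \;\le\; G(w)-G(1) \;\le\; -\tfrac1{3\sqrt2}\,r^3 + Cr^4 . $$

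Then I would fix $\epsilon_0>0$ small, in this order: first so that $\epsilon_0^{1/3}/2<\rho_0$ (so the expansion is valid on $S_{\epsilon_0}$), then so that $C\epsilon_0^{1/3}/2<\tfrac1{3\sqrt2}$ and $\tfrac1{24}+\tfrac{C}{16}\epsilon_0^{1/3}<1$. For $0<\epsilon<\epsilon_0$ and $w\in S_\epsilon$ one has $0<r<\epsilon^{1/3}/2$, whence $r^3<\epsilon/8$ and $r^4<\epsilon^{4/3}/16$. The upper estimate gives $G(w)-G(1)\le r^3\bigl(-\tfrac1{3\sqrt2}+Cr\bigr)<0$; since $G(1)=-\tfrac19<0$, this also shows $G(w)<0$, i.e.\ $w\in\Omega$. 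The lower estimate gives $G(w)-G(1)\ge-\tfrac{\epsilon}{24}-\tfrac{C}{16}\epsilon^{4/3}>-\epsilon$. Hence $S_\epsilon\subset\{w\in\Omega:\,-\epsilon<G(w)-G(1)<0\}$, which is the claim.

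I expect no deep obstacle; the delicate points are bookkeeping. The first is getting the range of $\cos 3\theta$ exactly right — the sector is tuned so that after tripling the argument one sits squarely in the arc where cosine is negative and bounded away from its zeros, which is precisely what places the sector inside a sublevel "petal" of $G$ at the degenerate critical point. The second is making sure the $\mathcal O(|w-1|^4)$ remainder is uniform and, after the substitution $r<\epsilon^{1/3}/2$, genuinely negligible both against the cubic main term (to get the strict inequality $G(w)-G(1)<0$, hence $w\in\Omega$) and against $\epsilon$ (to get $G(w)-G(1)>-\epsilon$); this forces the three successive smallness requirements on $\epsilon_0$, which must be imposed in that order.
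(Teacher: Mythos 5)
Your proposal is correct and follows essentially the same route as the paper: expand $G(w)-G(1)=\tfrac13\mathrm{Re}((w-1)^3)+\mathcal O(|w-1|^4)$ near $w=1$, use the angular window to pin $\cos 3\theta$ in $[-1,-\tfrac{1}{\sqrt2}]$, and let $r<\epsilon^{1/3}/2$ make the remainder negligible against both the cubic term and $\epsilon$. Your version is in fact slightly more careful than the paper's (which contains a few harmless typos in the intermediate constants) in that you explicitly order the smallness conditions on $\epsilon_0$ and verify $w\in\Omega$.
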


\begin{proof}
Suppose that $w\in S_\epsilon.$ Then 

\bea
G(w)-G(1) & = & G(w)+\frac{1}{9}\\
& = & \frac{1}{3}(Re)(w-1)^3+\mathcal O(w-1)^4\\
& \Rightarrow & \\
\frac{1}{3}(Re)((w-1)^3)-Cr^4 & \leq & G(w)-G(1)\\
& \leq & \frac{1}{3}(Re)((w-1)^3)+Cr^4\\
& \Rightarrow & \\
3r^3\cos(3\theta)-Cr^4 & \leq & G(w)+1\\
& \leq & 3r^3\cos(3\theta) +Cr^4\\
2\pi + \frac{3}{4}\pi  
& \leq & 3\theta\\
& \leq & 2\pi+\frac{5}{4}\pi\\
& \Rightarrow & \\
-1 & \leq & cos (3\theta)\\
& \leq & -\sqrt{2}/2\\
-\frac{1}{3} r^3-Cr^4 & \leq & G(w)-G(1)\\
& \leq & -\frac{1}{3}\sqrt{2}/2 r^3+Cr^4\\
-\frac{\epsilon}{24}-C\frac{\epsilon^{4/3}}{16} & \leq & G(w)-G(1)\\
& \leq & -\frac{\sqrt{2}\epsilon}{48}\\
& \Rightarrow & \\
-\epsilon & < & G(w)-G(1)<0\\
\eea
\end{proof}

Define the function $s(t)$ for $-\infty<t<0.$
$$
s(t)= \lambda(\{G(w)<t\})
$$
i.e. the area of the sublevel set where $G<t.$

\begin{corollary}
For all small enough $\epsilon>0$ we have that
$$
s(-\frac{1}{9})-s(-\frac{1}{9}-\epsilon) \geq \pi \frac{\epsilon^{2/3}}{48}.
$$
\end{corollary}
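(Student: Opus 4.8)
The plan is to recognize the left-hand side as the area of a thin ``level annulus'' for $G$, to bound that area below by the area of the explicit sector $S_\epsilon$ furnished by Lemma~2.3, and finally to compute the area of $S_\epsilon$ in polar coordinates.

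First I would write, for $\epsilon>0$,
\[
s(-\tfrac{1}{9})-s(-\tfrac{1}{9}-\epsilon)=\lambda\bigl(\{G<-\tfrac{1}{9}\}\bigr)-\lambda\bigl(\{G<-\tfrac{1}{9}-\epsilon\}\bigr)=\lambda(A_\epsilon),
\]
where $A_\epsilon:=\{w\in\Omega:\ -\tfrac{1}{9}-\epsilon\le G(w)<-\tfrac{1}{9}\}$. This rewriting is legitimate because $\{G<-\tfrac{1}{9}\}\subset\Omega$ is bounded, so all the measures involved are finite and $G$ is continuous away from its pole, making the sublevel sets measurable. Since $G(1)=-\tfrac{1}{9}$ by Lemma~2.1, Lemma~2.3 gives, for $0<\epsilon<\epsilon_0$,
\[
S_\epsilon\subset\{w\in\Omega:\ -\epsilon<G(w)-G(1)<0\}\subset A_\epsilon ,
\]
and hence $s(-\tfrac{1}{9})-s(-\tfrac{1}{9}-\epsilon)\ge\lambda(S_\epsilon)$.

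It then remains to compute $\lambda(S_\epsilon)$. Writing $w=1+re^{i\theta}$ and using the polar area element $r\,dr\,d\theta$, the sector $S_\epsilon=\{\,0<r<\epsilon^{1/3}/2,\ \tfrac{11\pi}{12}<\theta<\tfrac{13\pi}{12}\,\}$ has area
\[
\lambda(S_\epsilon)=\Bigl(\tfrac{13\pi}{12}-\tfrac{11\pi}{12}\Bigr)\cdot\frac{1}{2}\Bigl(\frac{\epsilon^{1/3}}{2}\Bigr)^2=\frac{\pi}{6}\cdot\frac{\epsilon^{2/3}}{8}=\frac{\pi\,\epsilon^{2/3}}{48}.
\]
Combining this with the previous inequality yields $s(-\tfrac{1}{9})-s(-\tfrac{1}{9}-\epsilon)\ge \pi\epsilon^{2/3}/48$ for all $0<\epsilon<\epsilon_0$, which is the assertion.

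There is essentially no obstacle here; the statement is a bookkeeping consequence of Lemma~2.3. The only points deserving a word of care are that $S_\epsilon$ genuinely lies inside $\Omega$ (which is already part of the conclusion of Lemma~2.3, since the set on the right-hand side of the inclusion is a subset of $\Omega$) and that the three sublevel-set measures are finite, so that the difference $s(-\tfrac{1}{9})-s(-\tfrac{1}{9}-\epsilon)$ really equals $\lambda(A_\epsilon)$; both follow at once from the boundedness of $\Omega$.
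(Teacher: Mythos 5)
Your proof is correct and is essentially the same as the paper's: both reduce the difference $s(-\tfrac19)-s(-\tfrac19-\epsilon)$ to the measure of the level band, bound it below by $\lambda(S_\epsilon)$ using Lemma~2.2, and compute the sector's area as $\tfrac{1}{12}\pi(\epsilon^{1/3}/2)^2=\pi\epsilon^{2/3}/48$. Your extra remarks on measurability and finiteness (and your use of $\le$ on the left edge of the band, which is the precise form of the identity) are harmless refinements of what the paper leaves implicit.
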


\begin{proof}
We have that 
\bea
s(-\frac{1}{9})-s(-\frac{1}{9}-\epsilon) & = & \lambda(\{-\frac{1}{9}-\epsilon<G<-\frac{1}{9}\})\\
& 
\geq & \lambda(S_\epsilon)\\
& = & \pi (\frac{\epsilon^{1/3}}{2})^2 \frac{1}{12}\\
\eea

This implies that
$$
s(-\frac{1}{9}-\epsilon)\leq s(-\frac{1}{9})-\pi\frac{\epsilon^{2/3}}{48}.
$$

\end{proof}

The following is immediate.

\begin{theo}
The function is not convex.
\end{theo}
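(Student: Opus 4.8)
The plan is to read off the failure of convexity of the function $t\mapsto\log s(t)$ directly from the Corollary. First I would record the elementary facts that make the assertion meaningful. Since $\Omega$ is bounded and $\{G<t\}\subset\Omega$ for every $t<0$, we have $0\le s(t)\le\lambda(\Omega)<\infty$; and since $G(w)=\log|w|+\mathcal O(1)$ near the origin, each sublevel set $\{G<t\}$ contains a punctured disc about $0$, so $s(t)>0$. Thus $s$ is a nondecreasing function taking values in $(0,\infty)$ on $(-\infty,0)$, and in particular $A:=s(-\tfrac19)=s(G(1))$ satisfies $0<A<\infty$, so $g:=\log s$ is a finite real-valued function on $(-\infty,0)$. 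The theorem asserts that this $g$ is not convex.

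I would then invoke the standard fact that a convex function on an open interval has a finite left-hand derivative at every interior point, so in particular its left-hand difference quotients there are bounded above. Hence it suffices to show that the left-hand difference quotients of $g$ at the interior point $t=-\tfrac19$ are unbounded. Since $s$ is nondecreasing we have $s(-\tfrac19-\epsilon)\le A$, so using $-\log(1-x)\ge x$ for $0\le x<1$ and then the Corollary, for all sufficiently small $\epsilon>0$:
\[
g\!\left(-\tfrac19\right)-g\!\left(-\tfrac19-\epsilon\right)
=-\log\!\left(1-\frac{s(-\tfrac19)-s(-\tfrac19-\epsilon)}{A}\right)
\ \ge\ \frac{s(-\tfrac19)-s(-\tfrac19-\epsilon)}{A}
\ \ge\ \frac{\pi}{48A}\,\epsilon^{2/3}.
\]
Dividing by $\epsilon$ gives $\bigl(g(-\tfrac19)-g(-\tfrac19-\epsilon)\bigr)/\epsilon\ \ge\ \tfrac{\pi}{48A}\,\epsilon^{-1/3}\to+\infty$ as $\epsilon\to 0^+$, so the left-hand difference quotients of $g$ at $-\tfrac19$ are unbounded and $g$ is not convex on $(-\infty,0)$. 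As $\Omega$ is a bounded domain in $\mathbb C$ (hence pseudoconvex) and $G$ is its Green function with pole at $0$, this is exactly the failure of the conjecture of Blocki and Zwonek for the pair $(\Omega,0)$, which is the content of the theorem.

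There is essentially no obstacle here --- the paper itself calls the statement ``immediate'' --- and the argument is just bookkeeping around the Corollary. The only two points that deserve a line of care are verifying $0<s(-\tfrac19)<\infty$ so that $\log s$ is defined and finite in a left neighborhood of $-\tfrac19$, and converting the lower bound on the jump of $s$ into one for the jump of $g=\log s$, which is the one-line estimate displayed above. If one prefers to avoid citing the left-derivative fact, one can instead substitute $t_1=-\tfrac19-\epsilon$, $t_2=-\tfrac19$, $t_3=-\tfrac1{18}$ into the three-point convexity inequality for $g$ and rearrange, obtaining $g(-\tfrac19)-g(-\tfrac19-\epsilon)=\mathcal O(\epsilon)$; the Corollary's $\epsilon^{2/3}$ lower bound then gives the contradiction as $\epsilon\to 0^+$.
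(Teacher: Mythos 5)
Your proof is correct and fills in exactly the argument the paper leaves implicit (the paper merely declares the theorem ``immediate'' from the Corollary): the $\epsilon^{2/3}$ drop in $s$ across an interval of length $\epsilon$ forces unbounded left difference quotients of $\log s$ at $t=-\tfrac19$, which is incompatible with convexity. The only ambiguity is that the paper's Theorem most likely refers to $s$ itself (with the logarithm handled in the final Corollary), but your estimate covers both cases at once, so nothing is lost.
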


\begin{corollary}
The function $t\rightarrow \log \lambda (\{G<t\})$ is not convex.
\end{corollary}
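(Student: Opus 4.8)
The plan is to deduce this directly from the non-convexity of $s$ itself (the Theorem just above), using nothing more than the elementary fact that $\exp$ is convex and non-decreasing on $\mathbb R$. First I would record that $t\mapsto s(t)=\lambda(\{G<t\})$ is a well-defined, finite, strictly positive function on $(-\infty,0)$: it is bounded above by $\lambda(\Omega)<\infty$ since $\Omega$ is bounded, and it is bounded below away from $0$ because near the pole $G(w)=\log|w|+\mathcal O(1)$, so $\{G<t\}$ always contains a punctured disc of positive area around the origin. Hence $\phi(t):=\log s(t)$ is an honest real-valued function on $(-\infty,0)$, and asking whether it is convex is meaningful.

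The key step is then essentially formal. Suppose, for contradiction, that $\phi=\log s$ were convex on $(-\infty,0)$. Since $\exp\colon\mathbb R\to(0,\infty)$ is convex and non-decreasing, the composition $\exp\circ\phi$ is convex; but $\exp\circ\phi=\exp(\log s)=s$, so $s$ would be convex on $(-\infty,0)$, contradicting the Theorem. This finishes the proof. I would stress that the implication runs only one way — convexity of $\log s$ forces convexity of $s$, but not conversely — so this corollary is genuinely a (formally stronger) consequence of the non-convexity of $s$, not a mere restatement of it.

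For readers who want a self-contained quantitative argument bypassing the Theorem, I would argue straight from the Corollary above. For a finite convex function $\phi$ on an open interval and interior points $a<b<c$ one has the three-chord inequality $\dfrac{\phi(b)-\phi(a)}{b-a}\le\dfrac{\phi(c)-\phi(b)}{c-b}$; fixing $b=-\tfrac19$ and any $c\in(-\tfrac19,0)$ and taking $a=-\tfrac19-\epsilon$ with $\epsilon\to0^{+}$ keeps the right-hand side equal to the constant $\bigl(\phi(c)-\phi(-\tfrac19)\bigr)/(c+\tfrac19)$, while the Corollary above together with $-\log(1-x)\ge x$ for $0<x<1$ yields
$$
\frac{\phi(-\tfrac19)-\phi(-\tfrac19-\epsilon)}{\epsilon}
=\frac1\epsilon\log\frac{s(-\tfrac19)}{s(-\tfrac19-\epsilon)}
\ \ge\ \frac{\pi}{48\,s(-\tfrac19)}\,\epsilon^{-1/3},
$$
whose right-hand side tends to $+\infty$ as $\epsilon\to0^{+}$ — the desired contradiction. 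In truth no real obstacle remains here: all the substance (the triply connected domain whose Green function has a third-order critical point, and the sector $S_\epsilon$ forcing $s$ to drop at least like $\epsilon^{2/3}$ near $t=-\tfrac19$) has already been supplied by the two Lemmas, and this final corollary merely packages those facts.
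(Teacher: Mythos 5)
Your proposal is correct and fills in exactly the reasoning the paper leaves implicit (the paper offers no proof beyond calling the preceding Theorem ``immediate''): the monotone-convexity of $\exp$ reduces the claim to the non-convexity of $s$, and your quantitative fallback via the three-chord inequality and the $\pi\epsilon^{2/3}/48$ drop is the same mechanism the paper uses to get non-convexity of $s$ itself. Your added checks that $s$ is finite, strictly positive, and hence that $\log s$ is well defined are details the paper omits but are needed and correctly supplied.
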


\bibliographystyle{amsplain}

\end{document}